\newtheorem{thm}{Theorem}[section]
\newtheorem{corollary}[thm]{Corollary}
\newtheorem{lemma}[thm]{Lemma}
\newtheorem{proposition}[thm]{Proposition}
\renewcommand{\v}{{\bf{v}}}
\newcommand{\y}{{\bf{y}}}
\newcommand{\0}{{\bf{0}}}
\begin{document}

\title[ Envelope of Mid-Hyperplanes of a Hypersurface]{ \LARGE{Envelope of Mid-Hyperplanes of a Hypersurface}}

\author[A.Cambraia Jr. ]{Ady Cambraia Jr.}
\address{
Departamento de Matem\'atica, UFV, Vi\c cosa, Minas Gerais, Brazil}

\email{ady.cambraia@ufv.br}
%Av. P. H. Rolfs, s/n, Campus Universitário, 
\author[M. Craizer]{Marcos Craizer}
\address{Departamento de Matem\'atica-PUC-Rio, Rio de Janeiro, Brazil.}
\email{craizer@puc-rio.br}

\begin{abstract}
Given $2$ points of a smooth hypersurface, their mid-hyperplane is the hyperplane passing through their mid-point and the intersection of their tangent 
spaces. In this paper we study the envelope of these mid-hyperplanes (EMH) at pairs whose tangent spaces are transversal. We prove that 
this envelope consists of centers of conics having contact of order at least $3$ with the hypersurface at both points. Moreover, 
we describe general conditions for the EMH to be a smooth hypersurface. These results are extensions of the corresponding well-known results for curves. 
In the case of curves, if the EMH is contained in a straight line, the curve is necessarily affinely symmetric with respect to the line. 
We show through a counter-example that this property does not hold for hypersurfaces.
\end{abstract}

\thanks{The second author thanks CNPq for financial support during the preparation of this paper.}
%----------classification, keywords, date

\subjclass{ 53A15}

\keywords{affine envelope symmetry set, mid-lines, mid-parallel tangent locus, affine symmetry}

\date{February 14, 2017}

\maketitle

%%%%%%%%%%%%%%%%%%%%%%%%%%%%%%%%%%%%%%%%%%%%%%%%%%%%%%%%%%%%%%%%%%%%%%%%%%%%%%%%
\section{Introduction}
\label{intro}
%%%%%%%%%%%%%%%%%%%%%%%%%%%%%%%%%%%%%%%%%%%%%%%%%%%%%%%%%%%%%%%%%%%%%%%%%%%%%%%%

Given a pair of points in a smooth convex planar curve, its mid-line is the line that passes through its mid-point and the intersection of the corres\-ponding tangent lines. If these tangent lines are parallel, the mid-line is the line through $M$ parallel to both tangents. When both points coincide, the mid-line is just the affine normal at the point. The envelope of mid-lines is an important affine invariant symmetry set associated with the curve. It is important in computer graphics and has been studied by many authors (\cite{Sapiro},\cite{Giblin},\cite{Giblin2},\cite{Holtom},\cite{Warder}). The envelope of mid-lines of planar curves can be divided into $3$ parts: The Affine Envelope Symmetry Set (AESS), corresponding to pairs with non-parallel tangent lines, the Mid-Points Parallel Tangent Locus (MPTL), corresponding to pairs with parallel tangent lines, and the Affine Evolute, corresponding to coincident points. 

The concept of mid-line has a quite natural generalization to a hypersurface $S$ in the affine $(N+1)$-space: For a pair $(p_1,p_2)$ in $S$, the {\it mid-hyperplane} is the affine hyperplane that passes through the mid-point $M$ of $(p_1,p_2)$ and the intersection of the tangent spaces at $p_1$ and $p_2$ (note that this intersection is a co-dimension $2$ affine space). It is then natural to ask what is the structure of the envelope of mid-hyperplanes. In this paper, we study this set assuming that the tangent spaces at $p_1$ and $p_2$ are transversal (for $N=1$, this set is the AESS). 
We shall call {\it Envelope of Mid-Hyperplanes} (EMH) the envelope of mid-hyperspaces of pairs $(p_1,p_2)$ with transversal tangent spaces. 
The envelope of mid-planes (N=2) with parallel tangent planes is called MPTS and has been studied in \cite{Warder}. The envelope of mid-planes (N=2) corresponding to coincident points is called Affine Mid-Planes Evolute and has been studied in \cite{Cambraia}.

The AESS is very well studied and coincides with the locus of center of conics having contact of order $\geq 3$ with the curve at $2$ points. Moreover,
if both contacts are of order exactly $3$, the AESS is regular (\cite{Giblin},\cite{Holtom}). 
In this article we prove that each point of the EMH is the center of a conic having contact of order $\geq 3$ with the hypersurface at $2$ points. 
Moreover, we describe a general conditition for the regularity of the EMH that generalizes the re\-gu\-larity condition for curves. 
This general condition is algebraic, but we give geometric interpretations in some particular cases.

The reflection property of the AESS is very significant for symmetry recognition: If the AESS is contained in a straight line $l$, then the curve itself is invariant under an affine reflection with axis $l$ (\cite{Giblin},\cite{Holtom}). Unfortunately, the reflection property does not extend to the EMH. 
We give an example where the EMH is contained in a plane (N=2) but the surface $S$ is not invariant under an affine reflection. An interesting
question here is to understand which kind of symmetry is implied by the inclusion of the EMH in a hyperplane. 

This work is part of the doctoral thesis of the first author under the supervision of the second author.

%%%%%%%%%%%%%%%%%%%%%%%%%%%%%%%%%%%%%%%%%%%%%%%%%%%%%%%%%%%%%%%%%%%%%%%%%%%%%%%%%%%%%%%%%%%%%%%%%%%%%%%%%%%

\section{Review of affine differential geometry of hypersurfaces}

%%%%%%%%%%%%%%%%%%%%%%%%%%%%%%%%%%%%%%%%%%%%%%%%%%%%%%%%%%%%%%%%%%%%%%%%%%%%%%%%%%%%%%%%%%%%%%%%%%%%%%%%%%%

\hspace{0.35cm} In this section we review some basic concepts of affine differential geometry of hypersurfaces in $(N+1)$-space (for details, see \cite{Nomizu}). Denote by $D$ the canonical connection and by
$[....]$ the standard volume form in the affine $(N+1)$-space.
Let $S$ be a hypersurface and denote by $\mathfrak{X}(S)$ the tangent bundle of $S$. Given a transversal vector field $\xi$, write the Gauss equation
\begin{equation}\label{eq:Gauss}
D_XY=\nabla_XY + h(X,Y)\xi,
\end{equation}
$X, Y \in \mathfrak{X}(S)$, where $h$ is a symmetric bilinear form and $\nabla$ is a torsion free connection in $S$. We shall assume that $h$ is non-degenerate, which is independent of the choice of $\xi$.
The volume form induces a volume form in $S$ by the relation
\begin{equation*}
\theta(X_1,...,X_N)=\left[ X_1,....,X_N,\xi \right].
\end{equation*}
The metric $h$ also defines a volume form in $S$: Given $X_i \in \mathfrak{X}(S)$, $1\leq i\leq N$,  let
\begin{equation*}
\theta_h(X_1,...,X_N):=|\det(h(X_i,X_j))|^\frac{1}{2}.
\end{equation*}
Next theorem is fundamental in affine differential geometry (\cite{Nomizu}, ch.II):
\begin{thm}
There exists, up to signal, a unique transversal vector field $\xi$ such that
$\nabla\theta=0$ and $\theta=\theta_h$. The vector field $\xi$ is called the {\it affine normal vector field} and the corresponding metric $h$ the {\it Blaschke metric} of the surface.
\end{thm}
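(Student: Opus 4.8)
The plan is to reduce everything to the change-of-transversal-field formulas and then solve two algebraic conditions in sequence. First I would fix \emph{any} transversal field $\xi_0$, with induced data $(\nabla_0,h_0,\theta_0)$, and write the Weingarten equation $D_X\xi_0=-A_0X+\tau_0(X)\xi_0$, where $A_0$ is the shape operator and $\tau_0$ the transversal connection form. The first key fact I would record is the identity
\begin{equation*}
(\nabla_0\theta_0)(X_1,\dots,X_N)=\tau_0(X)\,\theta_0(X_1,\dots,X_N),
\end{equation*}
proved by differentiating $\theta_0(X_1,\dots,X_N)=[X_1,\dots,X_N,\xi_0]$, using that $[....]$ is $D$-parallel and that the terms $h_0(X,X_i)\xi_0$ and $A_0X$ contribute zero (a repeated $\xi_0$, respectively $N+1$ tangent vectors). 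Thus the condition $\nabla\theta=0$ is \emph{equivalent} to the vanishing of $\tau$, which is the form I will actually work with. The general transversal field is then written as $\xi=\phi\,\xi_0+Z$ with $\phi$ nowhere zero and $Z\in\mathfrak{X}(S)$.

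Next I would derive the transformation rules under $\xi=\phi\,\xi_0+Z$. Decomposing $D_XY=\nabla_{0,X}Y+h_0(X,Y)\xi_0$ along $\xi$ gives $h=\phi^{-1}h_0$, independent of $Z$. Since $Z$ is tangent, $[X_1,\dots,X_N,Z]=0$, so $\theta=\phi\,\theta_0$, while $h=\phi^{-1}h_0$ yields $\theta_h=|\phi|^{-N/2}\theta_{h,0}$ from $\det(h(X_i,X_j))=\phi^{-N}\det(h_0(X_i,X_j))$. Finally, expanding $D_X\xi=D_X(\phi\xi_0+Z)$ with the Gauss and Weingarten equations and collecting the coefficient of $\xi$ produces
\begin{equation*}
\tau=\tau_0+d\log|\phi|+\tfrac{1}{\phi}\,h_0(Z,\cdot).
\end{equation*}
This last formula is the computational heart of the argument and the step I expect to be the main obstacle: it requires careful bookkeeping of tangential versus transversal parts and of the scaling term $(X\phi)\xi_0$.

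With these formulas in hand the two defining conditions decouple nicely. Imposing $\theta=\theta_h$ gives $\phi\,\theta_0=|\phi|^{-N/2}\theta_{h,0}$; choosing the orientation so that $\theta_0$ and $\theta_{h,0}$ are positive multiples and taking $\phi>0$, this reads $\phi^{(N+2)/2}=\theta_{h,0}/\theta_0$, so $\phi=(\theta_{h,0}/\theta_0)^{2/(N+2)}$ is uniquely determined. The two admissible orientations give exactly the pair $\pm\xi$, which accounts for the ``up to sign'' in the statement. With $\phi$ now fixed, the remaining condition $\tau=0$ becomes the \emph{linear} equation $h_0(Z,\cdot)=-\phi\,(\tau_0+d\log\phi)$ for the unknown tangent field $Z$; since $h$ (hence $h_0$) is nondegenerate, this has a unique solution $Z$. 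Because the volume-matching equation does not involve $Z$ and the equiaffinity equation is affine-linear in $Z$ once $\phi$ is known, both are solved uniquely, giving existence and uniqueness of $\xi$ up to sign, which completes the proof.
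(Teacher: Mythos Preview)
Your argument is correct and is essentially the classical proof one finds in Nomizu--Sasaki, Chapter~II. Note, however, that the paper does \emph{not} supply its own proof of this theorem: it merely states the result as background and cites \cite{Nomizu}. So there is nothing in the paper to compare your argument against beyond that reference, and your proposal matches the standard treatment there (fix a transversal field, derive the change-of-transversal formulas for $h$, $\theta$, $\theta_h$, and $\tau$, then solve first for the scalar $\phi$ via the apolarity/volume condition and then for the tangential part $Z$ via $\tau=0$ using nondegeneracy of $h$).
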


%The derivative of the Blaschke metric
%\begin{equation}\label{eq:DefineCubic}
%C(X_1,X_2,X_3)=\nabla_{X_1}(h)(X_2,X_3)
%\end{equation}
%is called the {\it cubic form} of the surface. It is symmetric in $(X_1,X_2,X_3)$ and satisfies the apolarity condition
%\begin{equation}\label{eq:Apolarity}
%tr_{h}C(X,\cdot,\cdot)=0,
%\end{equation}
%for each $X\in\mathfrak{X}(S)$, where $tr_h$ denotes the trace of the bilinear form with respect to the metric $h$ (\cite{Nomizu}, ch.II).

For $p\in S$, let $\nu(p)$ be the linear functional in $(N+1)$-space such that
\begin{equation*}\label{eq:DefineConormal}
\nu(p)(\xi)=1 \ \ \ \ \textrm{and} \ \ \ \ \nu(p)(X)=0 \ \forall \ X \in
T_pS.
\end{equation*}
The differentiable map $\nu$
is called the {\it conormal map}. It satisfies the following property (\cite{Nomizu}, ch.II):

\begin{proposition} \label{propconormal}
Let $S$ be a non-degenerate hypersurface and  $\nu$ the conormal map. Then
\begin{equation*}
D_Y\nu(\xi)=0 \ \ \textrm{and} \ \ D_Y\nu(X)=-h(Y,X), \ \ \forall \ X,
Y \in \mathfrak{X}(S).
\end{equation*}
\end{proposition}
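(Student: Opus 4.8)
The plan is to extract both identities by differentiating, along a tangent direction $Y$, the two equations that \emph{define} the conormal $\nu$, namely $\nu(\xi)=1$ and $\nu(X)=0$ for $X\in\mathfrak{X}(S)$. The essential tool is the Leibniz rule for the canonical (flat) connection $D$: viewing $\nu$ as a smooth map from $S$ into the dual of the $(N+1)$-space and any $Z$ as a vector field along $S$, the scalar function $\nu(Z)$ satisfies
\begin{equation*}
Y\big[\nu(Z)\big]=(D_Y\nu)(Z)+\nu(D_YZ),
\end{equation*}
because $D$ differentiates the constant pairing between vectors and covectors componentwise. Here $D_Y\nu$ is exactly the derivative of the covector-valued map appearing in the statement. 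Everything then reduces to computing $\nu(D_YZ)$ for $Z=\xi$ and $Z=X$.

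First I would treat the tangential identity, which is the more direct one. Taking $Z=X\in\mathfrak{X}(S)$, the function $\nu(X)$ vanishes identically on $S$, so its derivative is zero and the Leibniz rule gives $(D_Y\nu)(X)=-\nu(D_YX)$. Now the Gauss equation (\ref{eq:Gauss}) decomposes $D_YX=\nabla_YX+h(Y,X)\xi$ into its tangential and transversal parts. Since $\nabla_YX\in\mathfrak{X}(S)$ is annihilated by $\nu$ and $\nu(\xi)=1$, we obtain $\nu(D_YX)=h(Y,X)$, hence $(D_Y\nu)(X)=-h(Y,X)$, as claimed.

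For the transversal identity I would differentiate $\nu(\xi)=1$. Since the right-hand side is constant, the Leibniz rule gives $(D_Y\nu)(\xi)=-\nu(D_Y\xi)$, so the whole point is to show that $D_Y\xi$ has no transversal component, i.e. that $\nu(D_Y\xi)=0$. This is precisely where the affine (Blaschke) normal must be used rather than an arbitrary transversal field: the defining property $\nabla\theta=0$ of the affine normal is equivalent to the equiaffine condition that the transversal connection form vanishes, which says exactly that $D_Y\xi\in\mathfrak{X}(S)$. Granting this, $\nu(D_Y\xi)=0$ because $\nu$ kills tangent vectors, and therefore $(D_Y\nu)(\xi)=0$.

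I expect the only genuine subtlety to be this last point. The tangential formula follows mechanically from the Gauss equation, but the vanishing $D_Y\nu(\xi)=0$ is false for a general transversal field; it relies on the equiaffine normalization of $\xi$. Thus the main work, if one does not want to quote the Weingarten structure equation directly, is to argue from $\nabla\theta=0$ (together with $\theta=\theta_h$, as in the fundamental theorem above) that $D_Y\xi$ stays tangent to $S$.
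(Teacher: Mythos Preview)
The paper does not actually supply a proof of this proposition; it simply quotes the result from \cite{Nomizu}, chapter~II. Your argument is correct and is essentially the standard one found there: differentiate the defining relations $\nu(X)=0$ and $\nu(\xi)=1$ using the Leibniz rule for the flat connection $D$, feed in the Gauss decomposition $D_YX=\nabla_YX+h(Y,X)\xi$ for the first, and invoke the equiaffine property of the Blaschke normal (i.e.\ vanishing of the transversal connection form, equivalently $\nabla\theta=0$) to ensure $D_Y\xi$ is tangent for the second. Your identification of where the Blaschke normalization is genuinely needed---only in the identity $D_Y\nu(\xi)=0$---is accurate; the formula $D_Y\nu(X)=-h(Y,X)$ holds for any transversal $\xi$ with its associated $h$.
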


\begin{corollary}\label{propconormalqq}
If $X\in \mathfrak{X}(\mathbb{R}^{N+1})$ is any vector field, then
$$
D_Y\nu(X)=-h(Y,X^T), \ \ \ Y \in \mathfrak{X}(S),
$$
where $X=X^T+\lambda\xi, \lambda \in \mathbb{R}$ and $X^T$ is the tangent component of $X$.
\end{corollary}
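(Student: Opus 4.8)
The plan is to reduce the statement to Proposition \ref{propconormal} by exploiting the linearity of the covector $D_Y\nu$ in its vector argument. The key observation is that, for a fixed $Y \in \mathfrak{X}(S)$ and a fixed point $p \in S$, the object $D_Y\nu$ is an element of the dual space $(\mathbb{R}^{N+1})^*$, obtained by differentiating the conormal map $\nu$ (a covector-valued function on $S$) along $Y$. Consequently, evaluating $D_Y\nu$ on a vector is a \emph{linear} operation on that vector, and no product rule or differentiation of the argument is involved.

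First I would restrict the ambient vector field $X$ to $S$ and use the splitting $T_p\mathbb{R}^{N+1} = T_pS \oplus \mathbb{R}\,\xi(p)$ furnished by the transversal field $\xi$, writing $X = X^T + \lambda\xi$ with $X^T \in \mathfrak{X}(S)$. Applying the covector $D_Y\nu$ to both summands and using its pointwise linearity, I obtain
\begin{equation*}
D_Y\nu(X) = D_Y\nu(X^T) + \lambda\, D_Y\nu(\xi).
\end{equation*}

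Finally I would invoke the two identities of Proposition \ref{propconormal}: since $X^T$ is tangent to $S$, the first term equals $-h(Y, X^T)$; and since $D_Y\nu(\xi) = 0$, the second term vanishes regardless of the value of $\lambda$. This yields $D_Y\nu(X) = -h(Y, X^T)$, as claimed. There is no genuine obstacle here; the only points requiring care are to interpret $D_Y\nu(X)$ as the pairing of the differentiated conormal covector with $X$ (so that linearity in the argument is immediate rather than a Leibniz computation), and to observe that the coefficient $\lambda$ need not be constant along $S$ — it is evaluated pointwise and drops out entirely once $D_Y\nu(\xi)=0$ is used.
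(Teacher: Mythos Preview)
Your proof is correct and follows exactly the same route as the paper's own argument: decompose $X = X^T + \lambda\xi$, use the linearity of the covector $D_Y\nu$ in its argument, and apply the two identities from Proposition~\ref{propconormal} to each summand. Your added remarks clarifying that no Leibniz rule is involved and that $\lambda$ may vary pointwise are accurate but not needed for the argument itself.
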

\begin{proof}
We have
$$
D_Y\nu(X)=D_Y\nu(X^T+\lambda\zeta)=D_Y\nu(X^T)+\lambda D_Y\nu(\zeta)=-h(Y,X^T),
$$
thus proving the corollary.
\end{proof}

\begin{lemma}\label{lemma:MixedTerm}
Assume that $S$ is the graph of $f(x,\y)$, $\y=(y_1,...,y_{N-1})$, i.e., 
\begin{equation}
\psi(x,\y)=\left( x, \y, f(x,\y) \right)
\end{equation}
is a parameterization of $S$. 
Then, at any point $(x,\y)$, $h(\psi_x,\psi_{y_j})=0$ if  and only if $f_{xy_j}=0$.
\end{lemma}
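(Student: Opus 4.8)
The plan is to reduce the statement to a one-line computation by choosing a convenient auxiliary transversal vector field and then invoking the conformal invariance of the \emph{nullity} of the affine metric. The point is that the off-diagonal entry of $h$ differs from the corresponding entry of the second fundamental form computed with respect to the ``vertical'' transversal field only by a nowhere-vanishing scalar factor, so the two vanish simultaneously.

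First I would work with the naive transversal field $\xi_0=(0,\dots,0,1)$, which is transversal to $S$ at every point precisely because $S$ is a graph over the $(x,\y)$-plane. Writing the Gauss equation for this choice, $D_XY=\nabla^0_XY+h_0(X,Y)\,\xi_0$, where $h_0$ is the affine fundamental form associated with $\xi_0$. In the standard basis $e_1,\dots,e_{N+1}$ of $\mathbb{R}^{N+1}$ the tangent frame is $\psi_x=e_1+f_x\,\xi_0$ and $\psi_{y_j}=e_{j+1}+f_{y_j}\,\xi_0$, and since $D$ is the flat canonical connection the mixed second derivative is $D_{\psi_x}\psi_{y_j}=\psi_{xy_j}=f_{xy_j}\,\xi_0$, a vector pointing purely along $\xi_0$. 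Reading off the coefficient of $\xi_0$ gives $h_0(\psi_x,\psi_{y_j})=f_{xy_j}$, so the mixed entry of $h_0$ equals $f_{xy_j}$ exactly.

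Next I would pass from $h_0$ to the Blaschke metric $h$. Expressing the affine normal in terms of $\xi_0$ as $\xi=\phi\,\xi_0+Z$ with $Z\in\mathfrak{X}(S)$ and $\phi$ a nowhere-vanishing function (transversality of $\xi$ forces $\phi\neq0$), I would substitute $\xi_0=\phi^{-1}(\xi-Z)$ into the Gauss equation above and collect the component along $\xi$. This yields the standard transformation law $h=\phi^{-1}h_0$ of the affine fundamental form under a change of transversal field. Consequently $h(\psi_x,\psi_{y_j})=\phi^{-1}f_{xy_j}$, and since $\phi$ never vanishes, $h(\psi_x,\psi_{y_j})=0$ holds if and only if $f_{xy_j}=0$, which is the claim.

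The only nonroutine ingredient is the transformation law $h=\phi^{-1}h_0$, and I expect this to be the main point to get right, since it is where the conformal factor relating the two metrics enters and it is exactly this factor that cancels when testing for the vanishing of the mixed term. I would emphasize that the argument uses only the transversality of $\xi$, not its precise identity: the normalization $\theta=\theta_h$ that singles out the affine normal plays no role here, so the conclusion is insensitive to the Blaschke normalization.
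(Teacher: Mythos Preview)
Your proof is correct. The paper takes a slightly different route: rather than introducing the auxiliary vertical field $\xi_0$ and invoking the conformal transformation law $h=\phi^{-1}h_0$, it works directly with the affine normal $\xi$ and uses the volume form to extract the transversal component. From the Gauss equation $D_{\psi_x}\psi_{y_j}=\nabla_{\psi_x}\psi_{y_j}+h(\psi_x,\psi_{y_j})\xi$ one gets
\[
[\psi_x,\psi_{\y},\psi_{xy_j}]=h(\psi_x,\psi_{y_j})\,[\psi_x,\psi_{\y},\xi],
\]
since the tangential summand $\nabla_{\psi_x}\psi_{y_j}$ is killed by the determinant with the full tangent frame. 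A direct computation of the left-hand determinant gives $f_{xy_j}$, and transversality of $\xi$ makes the factor on the right nonzero. Your approach trades this volume-form trick for the transformation law under change of transversal field; it is a bit longer but has the merit of making explicit (as you note) that the conclusion is insensitive to the Blaschke normalization and in fact holds for \emph{any} transversal field. The paper's version is a touch more economical because it never names a second transversal field or the conformal factor $\phi$.
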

\begin{proof}
Observe first that, from equation \eqref{eq:Gauss}, we obtain
$$
\left[\psi_x,\psi_{\y},\psi_{xy_j}\right]=h(\psi_x,\psi_{y_j}) \left[ \psi_x,\psi_{\y},\xi \right].
$$
On the other hand, a direct calculation shows that
$$
[\psi_x,\psi_{\bf y},\psi_{xy_j}]=f_{xy_j}.
$$
Since $\left[ \psi_x,\psi_{\y},\xi \right]\neq 0$, we conclude the lemma.
\end{proof}

%%%%%%%%%%%%%%%%%%%%%%%%%%%%%%%%%%%%%%%%%%%%%%%%%%%%%%%%%%%%%%%%%%%%%%%%%%%%%%%%%%%%%%%%%%%%%%%%%%%%%%%%%%%

\section{Envelope of Mid-Hyperplanes}
%%%%%%%%%%%%%%%%%%%%%%%%%%%%%%%%%%%%%%%%%%%%%%%%%%%%%%%%%%%%%%%%%%%%%%%%%%%%%%%%%%%%%%%%%%%%%%%%%%%%%%%%%%%

Let $S$ be a non-degenerate convex hypersurface. Take points  $p_1, p_2 \in S$ and let $S_1\subset S$ and $S_2\subset S$ be open subsets around $p_1$ and $p_2$, respectively. Denote $h_1$ and $h_2$ the Blaschke metrics of $S_1$ and $S_2$, respectively. We shall assume
that the tangent spaces at points of $S_1$ are transversal to tangent spaces at points of $S_2$.

\subsection{Basic definitions}

Denote by $M(p_1,p_2)$ the mid-point and by $C(p_1,p_2)$ the mid-chord of $p_1$ and $p_2$, i.e., 
$$
M(p_1,p_2)=\dfrac{p_1+p_2}{2},\ \ C(p_1,p_2)=\dfrac{p_1-p_2}{2}.
$$ 
The mid-hyperplane of $(p_1,p_2)$ is the affine hyperplane that contains $M(p_1,p_2)$ and the intersection $Z$ of the tangent spaces at $p_1$ and $p_2$.
Let $F:S_1 \times S_2 \times \mathbb{R}^{N+1} \longrightarrow \mathbb{R}$ be given by
\begin{equation}\label{eq111}
F(p_1,p_2,X)=\left(\nu_2(C)\nu_1+\nu_1(C)\nu_2\right)(X-M).
\end{equation}
where $\nu_i$ is the co-normal map of $S_i$. 
It is not difficult to verify that, for $(p_1,p_2)$ fixed, $F(p_1,p_2,X)=0$ is the equation of the mid-hyperplane.

Consider frames  $\{Z_1, ...., Z_{N-1}\}$ of $Z$, each $Z_j$ being smooth functions of $(p_1,p_2)\in S_1\times S_2$. 
Consider also vector fields $Y_1,Y_2$ 
such that $Y_i(p_1,p_2)$ is tangent to $S_i$ and $h_i$-orthogonal to $Z$. 

We want to find $X$ satisfying $F=F_{p_1}=F_{p_2}=0$, for some $p_1\in S_1$, $p_2\in S_2$. Since $Y_i$ and $Z_j$ are $h_i$-orthogonals, 
$\{Y_i,Z_1,..,Z_{N-1}\}$ is a basis of $T_{p_i}S_i$, $i=1, 2$. Thus we have to find $X$ in the following system:
\begin{equation}\label{epm}
\left\{
  \begin{array}{ll}
    F(p_1,p_2,X)=0 \\
    F_{p_1}(p_1,p_2,X)(Y_1)=0 \\
    F_{p_2}(p_1,p_2,X)(Y_2)=0 \\
    F_{p_1}(p_1,p_2,X)(Z_j)=0 \\
    F_{p_2}(p_1,p_2,X)(Z_j)=0.
  \end{array}
\right.
\end{equation}
The notation $F_{p_i}(p_1,p_2,X)(W)$ corresponds to the partial derivative of $F$ with respect $p_i$ in the direction $W \in T_{p_i}S_i$, thus keeping $p_j$, $j\neq i$ and $X$ fixed.

\subsection{Solutions of the system \eqref{epm}}\label{sec:mainresult}

We begin with the following simple lemma:

\begin{lemma}\label{derivadasconormais}
We have that
$$
D_{Y_1}\nu_1=a\nu_1+b\nu_2 \ \ \ \textrm{and} \ \ \
D_{Y_2}\nu_2=\bar{a}\nu_1+\bar{b}\nu_2,
$$
where $a, b, \bar{a}, \bar{b}$ are given by
$$
a= -\dfrac{h_1(Y_1,X_2)}{\nu_1(X_2)},\ \  b=-\dfrac{h_1(Y_1,X_1)}{\nu_2(X_1)},\ \
\bar{a}=-\dfrac{h_2(Y_2,X_2)}{\nu_1(X_2)},\ \ \bar{b}=-\dfrac{h_2(Y_2,X_1)}{\nu_2(X_1)},
$$
for any $X_1\in T_{p_1}S_1$, $X_2\in T_{p_2}S_2$.
\end{lemma}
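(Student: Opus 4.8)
The plan is to first recognize $D_{Y_1}\nu_1$ and $D_{Y_2}\nu_2$ as linear functionals on $\mathbb{R}^{N+1}$ that annihilate the codimension-$2$ space $Z$, so that each must be a linear combination of $\nu_1$ and $\nu_2$, and then to pin down the coefficients by pairing with suitable tangent vectors. The key preliminary observation is that the space of covectors vanishing on $Z$ is two-dimensional, since $Z$ has codimension $2$. Both $\nu_1$ and $\nu_2$ lie in this space, because $Z\subset T_{p_1}S_1=\ker\nu_1$ and $Z\subset T_{p_2}S_2=\ker\nu_2$, and the transversality of the tangent spaces forces $\nu_1$ and $\nu_2$ to be linearly independent. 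Hence $\{\nu_1,\nu_2\}$ is a basis for the annihilator of $Z$.

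Next I would verify that $D_{Y_1}\nu_1$ indeed annihilates $Z$. For $W\in Z$, Corollary \ref{propconormalqq} gives $D_{Y_1}\nu_1(W)=-h_1(Y_1,W^T)$, where $W^T$ is the component of $W$ tangent to $S_1$; since $W\in Z\subset T_{p_1}S_1$ we have $W^T=W$, and the $h_1$-orthogonality of $Y_1$ to $Z$ yields $h_1(Y_1,W)=0$. Therefore $D_{Y_1}\nu_1=a\nu_1+b\nu_2$ for scalars $a,b$, and the identical argument applied to $S_2$ gives $D_{Y_2}\nu_2=\bar a\nu_1+\bar b\nu_2$.

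To extract the coefficients I would evaluate these identities on the tangent vectors $X_1\in T_{p_1}S_1$ and $X_2\in T_{p_2}S_2$, using that $\nu_1(X_1)=0$ and $\nu_2(X_2)=0$. Pairing $D_{Y_1}\nu_1=a\nu_1+b\nu_2$ with $X_2$ isolates $a\,\nu_1(X_2)$, and pairing with $X_1$ isolates $b\,\nu_2(X_1)$; solving and invoking Corollary \ref{propconormalqq} once more produces the stated expressions, and the symmetric computation yields $\bar a$ and $\bar b$. Note that transversality also guarantees $\nu_1(X_2)\neq 0$ and $\nu_2(X_1)\neq 0$ for generic such vectors, so the quotients are well defined.

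The only point requiring care, which I regard as the mild obstacle, is the passage from the value $-h_1(Y_1,X_2^{T})$ delivered by Corollary \ref{propconormalqq} to the expression $-h_1(Y_1,X_2)$ appearing in the statement: this uses that the affine normal direction is $h_1$-orthogonal to the tangent space, so that replacing $X_2$ by its tangential part $X_2^{T}$ leaves $h_1(Y_1,\cdot)$ unchanged. This projection matters precisely for the coefficients $a$ and $\bar b$, whose second argument is tangent to the \emph{opposite} surface; for $b$ and $\bar a$ the second argument is already tangent to the surface defining the metric, so $X^{T}=X$ and no adjustment is needed. Keeping track of which tangential projection is relevant in each of the four coefficients is the main bookkeeping effort of the proof.
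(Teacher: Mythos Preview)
Your argument is correct and coincides with the paper's own proof: both show that $D_{Y_1}\nu_1$ kills each $Z_j$ via the $h_1$-orthogonality of $Y_1$ to $Z$, conclude it lies in the span of $\nu_1,\nu_2$, and then evaluate on $X_1$ and $X_2$ to read off the coefficients. On your final worry: $h_1$ is defined only on $T_{p_1}S_1$, so the expression $h_1(Y_1,X_2)$ in the lemma is simply shorthand for $h_1(Y_1,X_2^{T})$ as delivered by Corollary~\ref{propconormalqq}, rather than a claim that the affine normal is $h_1$-orthogonal to anything.
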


\begin{proof} 
Take a basis $\{\nu_1,\nu_2,\zeta_1,...,\zeta_{N-1}\}$ of the dual space $\mathbb{R}_{N+1}$. Thus we can write the linear functional $D_{Y_1}\nu_1$ as a linear combination of the
basis vector, i.e., $D_{Y_1}\nu_1= a\nu_1+b\nu_2+\sum_{j=1}^{N-1}c_j\zeta_j$.
Since $D_{Y_1}\nu_1(Z_j)=-h_1(Y_1,Z_j)=0$ we obtain $c_j=0$ and so $D_{Y_1}\nu_1= a\nu_1+b\nu_2$.  
Applying $D_{Y_1}\nu_1$ to any tangent vector field $X_1$ on $S_1$ we get
$D_{Y_1}\nu_1(X_1)=b\nu_2(X_1)$, thus proving the formula for $b$. The other formulas are proved similarly.
\end{proof}

\begin{proposition} \label{teoprincipal}
The first three equations of the system \eqref{epm} admit a solution if and only if
\begin{equation}\label{eq:nu1nu2}
\nu_1(C)=-\lambda\nu_2(C),
\end{equation}
where
\begin{equation}\label{eq:Lambda}
\lambda=\left(\displaystyle\frac{\nu_1^2(Y_2)}{\nu_2^2(Y_1)}\displaystyle\frac{h_1(Y_1,Y_1)}{h_2(Y_2,Y_2)}\right)^{1/3}.
\end{equation}
\end{proposition}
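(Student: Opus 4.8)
The plan is to reduce the first three equations of \eqref{epm} to a linear system in two scalar unknowns and then impose its consistency. Write $\alpha=\nu_2(C)$ and $\beta=\nu_1(C)$, set $V=X-M$, and abbreviate $A=\nu_1(V)$, $B=\nu_2(V)$, so that by \eqref{eq111} the first equation is simply $F=\alpha A+\beta B=0$. Because the tangent spaces at $p_1$ and $p_2$ are transversal, $\nu_1$ and $\nu_2$ are linearly independent functionals; hence $X\mapsto(\nu_1(X-M),\nu_2(X-M))$ is onto $\mathbb{R}^2$ with an $(N-1)$-dimensional fiber (the translates of $Z$), and prescribing $(A,B)$ is equivalent to prescribing $X$. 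Thus solving \eqref{epm} for $X$ is the same as solving for $(A,B)$.

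Next I would differentiate, using the elementary facts $D_{Y_1}C=D_{Y_1}M=\tfrac12 Y_1$, $D_{Y_2}C=-\tfrac12 Y_2$, $D_{Y_2}M=\tfrac12 Y_2$, together with $D_{Y_1}\nu_2=0$ and $D_{Y_2}\nu_1=0$ (each $\nu_i$ depends only on $p_i$), the expansions $D_{Y_1}\nu_1=a\nu_1+b\nu_2$ and $D_{Y_2}\nu_2=\bar a\nu_1+\bar b\nu_2$ from Lemma \ref{derivadasconormais}, and the vanishings $\nu_1(Y_1)=\nu_2(Y_2)=0$. Applying the product rule to $F=\alpha\,\nu_1(V)+\beta\,\nu_2(V)$ with $X$ fixed (so $D_{Y_i}V=-\tfrac12 Y_i$) should turn the second and third equations into
\[
\Bigl(\tfrac12\nu_2(Y_1)+\alpha a\Bigr)A+\bigl(2\alpha b+a\beta\bigr)B=\tfrac12\beta\,\nu_2(Y_1),
\]
\[
\bigl(2\bar a\beta+\bar b\alpha\bigr)A+\Bigl(\bar b\beta-\tfrac12\nu_1(Y_2)\Bigr)B=\tfrac12\alpha\,\nu_1(Y_2).
\]

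Together with $\alpha A+\beta B=0$ these are three linear equations in $(A,B)$, so the system is solvable exactly when the $3\times3$ augmented determinant vanishes. Expanding it along the column of right-hand sides, I expect the terms carrying $a$, $\bar b$ and the mixed product $\nu_1(Y_2)\nu_2(Y_1)$ to cancel in pairs, leaving the clean relation
\[
b\,\alpha^3\,\nu_1(Y_2)+\bar a\,\beta^3\,\nu_2(Y_1)=0.
\]
Finally I would substitute the explicit coefficients from Lemma \ref{derivadasconormais}, choosing $X_1=Y_1$ and $X_2=Y_2$, namely $b=-h_1(Y_1,Y_1)/\nu_2(Y_1)$ and $\bar a=-h_2(Y_2,Y_2)/\nu_1(Y_2)$; clearing denominators gives $\alpha^3 h_1(Y_1,Y_1)\nu_1^2(Y_2)=-\beta^3 h_2(Y_2,Y_2)\nu_2^2(Y_1)$, that is $(\beta/\alpha)^3=-\lambda^3$ with $\lambda$ as in \eqref{eq:Lambda}. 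Taking the real cube root yields $\beta=-\lambda\alpha$, i.e. $\nu_1(C)=-\lambda\nu_2(C)$, which is \eqref{eq:nu1nu2}.

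The main obstacle is purely computational bookkeeping: organizing the product-rule expansion so that the many terms pair off, and checking that it is exactly the coefficients $a$ and $\bar b$ (the ones whose explicit values are never needed) that drop out, while $b$ and $\bar a$ survive. A secondary point to state carefully is the rank hypothesis behind the ``if and only if'': one must know the $2\times2$ coefficient matrix of $(A,B)$ has rank two, so that vanishing of the augmented determinant is genuinely equivalent to consistency, and this is where transversality and the non-degeneracy of the $h_i$ enter.
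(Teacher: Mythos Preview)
Your approach is correct and essentially coincides with the paper's. The paper computes $F_{p_1}(Y_1)$ and $F_{p_2}(Y_2)$ exactly as you do, observes that the $a$- and $\bar b$-terms assemble into multiples of $F$ (so they drop out once $F=0$), then eliminates $\nu_1(X-M)$ via $F=0$ to obtain two scalar equations in $\nu_2(X-M)$ whose compatibility yields $b\,\nu_2^3(C)\,\nu_1(Y_2)=-\bar a\,\nu_1^3(C)\,\nu_2(Y_1)$ and hence \eqref{eq:nu1nu2}; your $3\times 3$ augmented-determinant formulation is simply a linear-algebraic repackaging of that same elimination.
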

\begin{proof}
Since $\nu_1(Y_1)=\nu_2(Y_2)=0$, it follows that the derivative
$F_{p_1}(Y_1)$ is given by
$$
D_{Y_1}\nu_1(C)\nu_2(X-M)+\nu_2(C)D_{Y_1}\nu_1(X-M)+\dfrac{1}{2}\nu_2(Y_1)\nu_1(X-M)-\dfrac{1}{2}\nu_1(C)\nu_2(Y_1),
$$
By Lemma \ref{derivadasconormais} we obtain
\begin{equation*}
F_{p_1}(Y_1)=\left(2b\nu_2(C)\nu_2 +\dfrac{1}{2}\nu_2(Y_1)\nu_1\right)(X-M) -\dfrac{1}{2}\nu_1(C)\nu_2(Y_1) + aF.
\end{equation*}
Similarly
\begin{equation*}
F_{p_2}(Y_2)= \left(2\bar{a}\nu_1(C)\nu_1 -\dfrac{1}{2}\nu_1(Y_2)\nu_2\right)(X-M) -\dfrac{1}{2}\nu_2(C)\nu_1(Y_2)+ \bar{b}F.
\end{equation*}
Using that $F=0$, the equations $F_{p_1}(Y_1)=0$ and $F_{p_2}(Y_2)=0$ can be simplified to
\begin{equation}\label{eq3}
\left(-\dfrac{\nu_2(Y_1)\nu_1(C)\nu_2}{\nu_2(C)}+2b\nu_2(C)\nu_2\right)(X-M)=\dfrac{1}{2}\nu_1(C)\nu_2(Y_1),
\end{equation}
\begin{equation*}\label{eq4}
\left(-\dfrac{2\bar{a}\nu_1^2(C)\nu_2}{\nu_2(C)}-\nu_1(Y_2)\nu_2\right)(X-M)=\dfrac{1}{2}\nu_2(C)\nu_1(Y_2).
\end{equation*}
These equations, after some simple calculations, leads to
\begin{equation*}\label{eq5}
b\nu_2^3(C)\nu_1(Y_2)=-\bar{a}\nu_1^3(C)\nu_2(Y_1),
\end{equation*}
which, together with lemma \ref{derivadasconormais}, proves the proposition.
\end{proof}

Next lemma is a consequence of the first three equations of system \eqref{epm}:

\begin{lemma}\label{corda}
From equation \eqref{eq:nu1nu2}, we can write 
\begin{equation}\label{eq:C}
C= A\left(Y_1-\dfrac{\lambda\nu_2(Y_1)}{\nu_1(Y_2)}Y_2\right)+\sum_{j=1}^{N-1} \alpha_j Z_j,
\end{equation}
for some $A\in\mathbb{R}$, $\alpha_j\in\mathbb{R}$. 
Then
\begin{equation}\label{eq:X-M}
X-M=B\left(Y_1+\dfrac{\lambda\nu_2(Y_1)}{\nu_1(Y_2)}Y_2\right)+\sum_{j=1}^{N-1} \beta_j Z_j,
\end{equation}
where $\beta_j \in \mathbb{R}$ and 
\begin{equation}\label{eq:B}
B=-\dfrac{\lambda A}{\lambda+4Ab}.
\end{equation}
\end{lemma}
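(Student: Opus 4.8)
The plan is to carry out the entire computation in the basis $\{Y_1,Y_2,Z_1,\dots,Z_{N-1}\}$ of $\mathbb{R}^{N+1}$. That these $N+1$ vectors form a basis follows from transversality: $T_{p_1}S_1\cap T_{p_2}S_2=Z$ is $(N-1)$-dimensional, hence $T_{p_1}S_1+T_{p_2}S_2=\mathbb{R}^{N+1}$, and since $\{Y_1,Z_1,\dots,Z_{N-1}\}$ and $\{Y_2,Z_1,\dots,Z_{N-1}\}$ span $T_{p_1}S_1$ and $T_{p_2}S_2$, the listed vectors span the sum. I will use throughout that $\nu_i$ annihilates $T_{p_i}S_i$, so $\nu_1(Y_1)=\nu_1(Z_j)=0$ and $\nu_2(Y_2)=\nu_2(Z_j)=0$.

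To obtain \eqref{eq:C}, I would write $C=a_1Y_1+a_2Y_2+\sum_j\alpha_jZ_j$ and apply $\nu_1,\nu_2$, which leaves only $\nu_1(C)=a_2\nu_1(Y_2)$ and $\nu_2(C)=a_1\nu_2(Y_1)$. Feeding these into the solvability condition \eqref{eq:nu1nu2} gives $a_2\nu_1(Y_2)=-\lambda a_1\nu_2(Y_1)$, i.e. $a_2=-\lambda\nu_2(Y_1)a_1/\nu_1(Y_2)$; putting $A:=a_1$ is exactly \eqref{eq:C}. I record for later that this forces $\nu_2(C)=A\nu_2(Y_1)$ and $\nu_1(C)=-\lambda A\nu_2(Y_1)$.

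For $X-M$, write $X-M=b_1Y_1+b_2Y_2+\sum_j\beta_jZ_j$, so $\nu_1(X-M)=b_2\nu_1(Y_2)$ and $\nu_2(X-M)=b_1\nu_2(Y_1)$ (the $\beta_j$ remaining free, to be fixed by the last equations of \eqref{epm}). The expressions for $F_{p_1}(Y_1)$ and $F_{p_2}(Y_2)$ computed in the proof of Proposition~\ref{teoprincipal}, together with $F=0$, read
\begin{align*}
2b\,\nu_2(C)\,\nu_2(X-M)+\tfrac12\nu_2(Y_1)\,\nu_1(X-M)&=\tfrac12\nu_1(C)\,\nu_2(Y_1),\\
2\bar a\,\nu_1(C)\,\nu_1(X-M)-\tfrac12\nu_1(Y_2)\,\nu_2(X-M)&=\tfrac12\nu_2(C)\,\nu_1(Y_2).
\end{align*}
I would form the combination $\nu_1(Y_2)\times(\text{first})+\lambda\nu_2(Y_1)\times(\text{second})$, which cancels both right-hand sides by \eqref{eq:nu1nu2}. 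Inserting $\nu_2(C)=A\nu_2(Y_1)$, $\nu_1(C)=-\lambda A\nu_2(Y_1)$ and the values $b=-h_1(Y_1,Y_1)/\nu_2(Y_1)$, $\bar a=-h_2(Y_2,Y_2)/\nu_1(Y_2)$ from Lemma~\ref{derivadasconormais}, the definition \eqref{eq:Lambda} of $\lambda$ produces the key identity $b\,\nu_1(Y_2)=\bar a\,\lambda^3\nu_2(Y_1)$, after which the combined equation factors as $(4bA-\lambda)\big(\nu_2(Y_1)\,b_1-\nu_1(Y_2)\,b_2/\lambda\big)=0$. Cancelling $4bA-\lambda$ gives $b_2=\lambda\nu_2(Y_1)\,b_1/\nu_1(Y_2)$, which is precisely the coefficient pattern of \eqref{eq:X-M} with $B:=b_1$; re-inserting this ratio into the first identity collapses it to $b_1(\lambda+4Ab)=-\lambda A$, that is \eqref{eq:B}.

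The main obstacle is the middle of the last step: the two scalar equations are genuinely coupled, and the clean ratio only surfaces after the inhomogeneous terms are removed through \eqref{eq:nu1nu2} and the precise form of $\lambda$ in \eqref{eq:Lambda} (equivalently $b\,\nu_1(Y_2)=\bar a\lambda^3\nu_2(Y_1)$, already latent in Proposition~\ref{teoprincipal}) is invoked at exactly the right moment, so that both coefficients share the common factor $4bA-\lambda$. The divisions performed require $\nu_1(Y_2)\neq0$ and $\nu_2(Y_1)\neq0$ (guaranteed by transversality and nondegeneracy of the $h_i$) together with the nondegenerate locus $\lambda\neq\pm4bA$, where the underlying $2\times2$ system is invertible; in particular $\lambda+4Ab\neq0$ is exactly the condition making $B$ in \eqref{eq:B} well defined.
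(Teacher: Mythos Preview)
Your argument is correct, but it takes a noticeably longer route than the paper's. The paper observes that the mid-hyperplane equation $F=0$ itself, together with \eqref{eq:nu1nu2}, already forces $\nu_1(X-M)=\lambda\,\nu_2(X-M)$, and this single relation immediately pins down the ratio $b_2=\lambda\nu_2(Y_1)b_1/\nu_1(Y_2)$, i.e.\ the form \eqref{eq:X-M}. With that in hand, equation \eqref{eq3} from $F_{p_1}(Y_1)=0$ alone yields $B$ directly: substituting $\nu_2(X-M)=B\nu_2(Y_1)$ and $\nu_2(C)=A\nu_2(Y_1)$ into \eqref{eq3} gives $B(\lambda+4Ab)=-\lambda A$ in one line. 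No factoring, no appeal to the $\lambda^3$ identity, and the only nondegeneracy needed is $\lambda+4Ab\neq 0$, which is exactly what \eqref{eq:B} requires.

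You instead bypass the direct use of $F=0$ and solve the $2\times 2$ system coming from $F_{p_1}(Y_1)=0$ and $F_{p_2}(Y_2)=0$. This works, but the price is the extra algebra (the combination that kills the right-hand sides, the identity $b\,\nu_1(Y_2)=\bar a\,\lambda^3\nu_2(Y_1)$, and the factoring step) and an extraneous hypothesis $4Ab\neq\lambda$: the coefficient matrix of your $2\times 2$ system has determinant proportional to $(4Ab-\lambda)(4Ab+\lambda)$, so your method degenerates when $4Ab=\lambda$ even though the lemma itself remains valid there. The upside of your route is that it provides an internal consistency check, showing explicitly that both $Y_i$-equations are satisfied, but the paper's approach is shorter and sharper.
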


\begin{proof}
It follows from $F=0$ and equation \eqref{eq:nu1nu2} that $\nu_1(X-M)=\lambda\nu_2(X-M)$. Then equation \eqref{eq:X-M} holds, for some $B\in\mathbb{R}$.
From equation \eqref{eq3} we have
$$
\nu_2(X-M)=\dfrac{-\lambda\nu_2(C)\nu_2(Y_1)}{\lambda\nu_2(Y_1)+4b\nu_2(C)}.
$$
We conclude that 
$$
B=-\dfrac{\lambda\nu_2(C)}{\lambda\nu_2(Y_1)+4b\nu_2(C)}=-\dfrac{\lambda A}{\lambda+4Ab},
$$
thus proving the lemma.
\end{proof}

Next theorem is the main result of the section and says that the ge\-o\-me\-try of the EMH occurs in the plane generated by $Y_1$ and $Y_2$ (see figure \ref{fig1}).

\begin{figure}[ht]
  \centering
  % Requires \usepackage{graphicx}
  \includegraphics[scale=0.45]{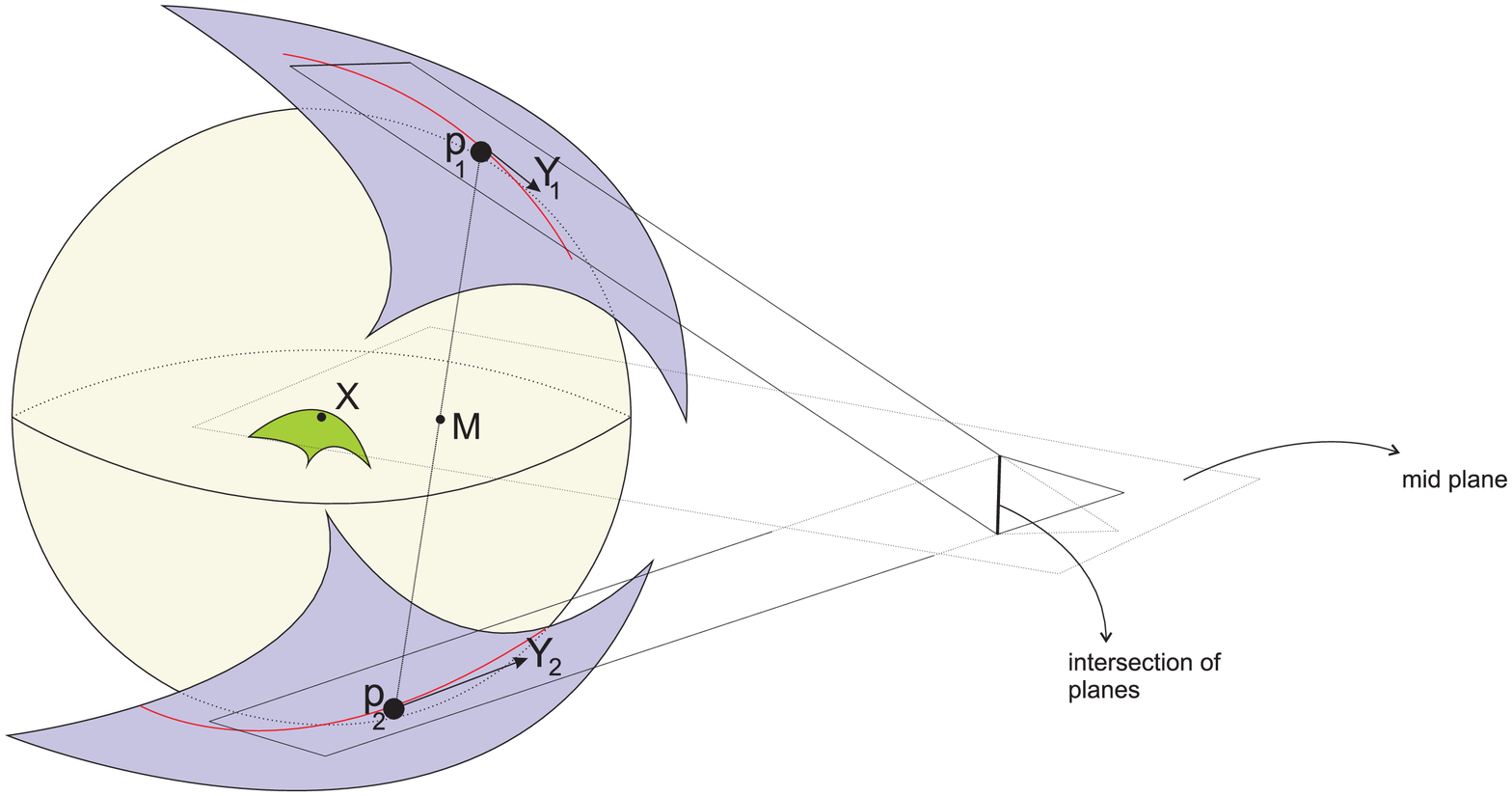}\\
  \caption{The geometry of the EMH.}
  \label{fig1}
\end{figure}

\begin{thm}\label{segundaparte}
The system \eqref{epm} admits a solution if and only if  $C,Y_1,$ and $Y_2$ are co-planar and 
equation \eqref{eq:nu1nu2} holds. Moreover, the solution of the system is given by
\begin{equation}
X-M=B\left(Y_1+\dfrac{\lambda\nu_2(Y_1)}{\nu_1(Y_2)}Y_2\right),
\end{equation}
where $\lambda$ and $B$ are given by equations \eqref{eq:Lambda} and \eqref{eq:B}, respectively.
\end{thm}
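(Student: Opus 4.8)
The plan is to treat the first three equations of \eqref{epm} as already settled by Proposition \ref{teoprincipal} and Lemma \ref{corda}, and to extract the geometric content of the two remaining equations $F_{p_1}(Z_j)=0$ and $F_{p_2}(Z_j)=0$. Assuming \eqref{eq:nu1nu2} holds, Lemma \ref{corda} guarantees that any solution $X$ of the first three equations has $C$ and $X-M$ of the forms \eqref{eq:C} and \eqref{eq:X-M}. The whole theorem then reduces to showing that the two extra equations force the $Z$-components $\alpha_j$ of $C$ and $\beta_j$ of $X-M$ to vanish: $\alpha_j=0$ is precisely the coplanarity of $C,Y_1,Y_2$, and $\beta_j=0$ collapses \eqref{eq:X-M} to the asserted formula.

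First I would differentiate $F$ along $Z_j$ exactly as in the proof of Proposition \ref{teoprincipal}. Since $Z_j\in Z\subset T_{p_1}S_1\cap T_{p_2}S_2$, we have $\nu_1(Z_j)=\nu_2(Z_j)=0$, and every term in the derivative carrying a factor $\nu_i(Z_j)$ drops out, leaving
\begin{equation*}
F_{p_1}(Z_j)=\nu_2(C)\,D_{Z_j}\nu_1(X-M)+D_{Z_j}\nu_1(C)\,\nu_2(X-M),
\end{equation*}
and symmetrically
\begin{equation*}
F_{p_2}(Z_j)=D_{Z_j}\nu_2(C)\,\nu_1(X-M)+\nu_1(C)\,D_{Z_j}\nu_2(X-M).
\end{equation*}
This collapse is exactly what makes the computation tractable.

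Next I would evaluate the conormal derivatives with Corollary \ref{propconormalqq}, writing $D_{Z_j}\nu_i(V)=-h_i(Z_j,V^{T})$, where $V^{T}$ is the component of $V$ tangent to $S_i$. Substituting \eqref{eq:C} and \eqref{eq:X-M} and using the defining orthogonality $h_1(Z_j,Y_1)=0$, $h_2(Z_j,Y_2)=0$, together with $\nu_1(Y_1)=\nu_2(Y_2)=0$ (whence $\nu_2(C)=A\nu_2(Y_1)$, $\nu_2(X-M)=B\nu_2(Y_1)$, $\nu_1(C)=-A\lambda\nu_2(Y_1)$, $\nu_1(X-M)=B\lambda\nu_2(Y_1)$), the mixed contributions cancel: the terms in $Y_2^{T}$ in the first equation and in $Y_1^{T}$ in the second drop out in pairs. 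What survives is purely the $Z$-part,
\begin{equation*}
\sum_{k=1}^{N-1}\bigl(A\beta_k+B\alpha_k\bigr)h_1(Z_j,Z_k)=0,\qquad \sum_{k=1}^{N-1}\bigl(B\alpha_k-A\beta_k\bigr)h_2(Z_j,Z_k)=0,
\end{equation*}
for every $j$.

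Finally I would invoke non-degeneracy. Since $\{Y_i,Z_1,\dots,Z_{N-1}\}$ is a basis of $T_{p_i}S_i$ in which $Y_i$ is $h_i$-orthogonal to each $Z_k$, the Gram matrix of $h_i$ is block-diagonal; hence the restriction of $h_i$ to $Z$ is non-degenerate and the matrices $\bigl(h_i(Z_j,Z_k)\bigr)$ are invertible. Therefore $A\beta_k+B\alpha_k=0$ and $B\alpha_k-A\beta_k=0$ for all $k$, which yield $B\alpha_k=0$ and $A\beta_k=0$. For a genuine envelope point one has $A\neq0$ (the chord $C$ is not contained in $Z$), and then $B\neq0$ by \eqref{eq:B}, so $\alpha_k=\beta_k=0$. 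This proves coplanarity of $C,Y_1,Y_2$ and the formula for $X-M$; conversely, if $C,Y_1,Y_2$ are coplanar and \eqref{eq:nu1nu2} holds, then $\alpha_k=0$ and choosing $\beta_k=0$ satisfies the two reduced equations, so the full system is solvable. I expect the cancellation bookkeeping in the third step — tracking the tangent components $C^{T}$ and $(X-M)^{T}$ relative to each $S_i$ and checking that the $Y_1,Y_2$ cross-terms really cancel — to be the main technical obstacle, with the invertibility of $\bigl(h_i(Z_j,Z_k)\bigr)$ being the structural fact that makes the conclusion rigorous.
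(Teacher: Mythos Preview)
Your proposal is correct and follows essentially the same route as the paper: compute $F_{p_1}(Z_j)$ and $F_{p_2}(Z_j)$ via Corollary~\ref{propconormalqq}, substitute the expansions \eqref{eq:C} and \eqref{eq:X-M}, observe that the $Y_2^{T}$ and $Y_1^{T}$ cross-terms cancel, and use non-degeneracy of $(h_i(Z_j,Z_k))$ to force $\alpha_k=\beta_k=0$. The only minor differences are that the paper invokes convexity (definiteness of $h_i$) rather than your block-diagonal Gram argument, and that you are more explicit about needing $A\neq 0$, $B\neq 0$ and about the converse direction.
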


\begin{proof}
We must show that $\alpha_j=\beta_j=0$ at equations \eqref{eq:C} and \eqref{eq:X-M}, respectively.  For this, we shall consider the last two equations of the system \eqref{epm}.
The derivative $F_{p_1}(Z_j)$ is given by
$$
F_{p_1}(Z_j)= D_{Z_j}\nu_1(C)\nu_2(X-M)+\nu_2(C)D_{Z_j}\nu_1(X-M) .
$$
Thus, from Corollary \ref{propconormalqq}, 
\begin{equation}\label{eq:Fp1}
F_{p_1}(Z_j)=-h_1(Z_j,C^T)\nu_2(X-M)- \nu_2(C)h_1(Z_j,(X-M)^T),
\end{equation}
where $V^T$ denotes the projection of $V$ in $T_{p_1}S_1$ along the direction of the affine normal of $S_1$ at $p_1$. 
From equations \eqref{eq:C} and \eqref{eq:X-M} we obtain
$$
C^T= A\left(Y_1-\dfrac{\lambda\nu_2(Y_1)}{\nu_1(Y_2)}Y_2^T\right) +\sum_{k=1}^{N-1}\alpha_k Z_k
$$
and
$$
(X-M)^T=B\left(Y_1+\dfrac{\lambda\nu_2(Y_1)}{\nu_1(Y_2)}Y_2^T\right) +\sum_{k=1}^{N-1} \beta_k Z_k.
$$
Substituting these equations in equation \eqref{eq:Fp1} we obtain 
\begin{equation*}
F_{p_1}(Z_j)=-\nu_2(Y_1) \sum_{k=1}^{N-1}(B\alpha_k+A\beta_k)h_1(Z_j,Z_k).
\end{equation*}
Similarly we obtain
$$
F_{p_2}(Z_j)=\lambda\nu_2(Y_1)\sum_{k=1}^{N-1}(-\alpha_k B  + \beta_k A)h_2(Z_j,Z_k).
$$
Since $S_1$ and $S_2$ are convex, $(h_i(Z_j,Z_k))$, $i=1,2$, are definite matrices, hence non-degenerate. Moreover, non-parallel tangent planes imply that  $\nu_2(Y_1)$ and $\lambda$ are non-zero. 
Thus equations $F_{p_1}(Z_j)=F_{p_2}(Z_j)=0$ are equivalent to $\alpha_j B  + \beta_j A=-\alpha_j B  + \beta_j A=0$, which implies that $\alpha_j=\beta_j=0$.
\end{proof}

%%%%%%%%%%%%%%%%%%%%%%%%%%%%%%%%%%%%%%%%%%%%%%%%%%%%%%%%%%%%%%%%%%%%%%%%%%%%%%%%%%%%%%%%%%%%%%%%%%%%%%%%%%%

\section{Conics with $3+3$ contact with the surface}

%%%%%%%%%%%%%%%%%%%%%%%%%%%%%%%%%%%%%%%%%%%%%%%%%%%%%%%%%%%%%%%%%%%%%%%%%%%%%%%%%%%%%%%%%%%%%%%%%%%%%%%%%%%

Given two non-degenerate locally convex hypersurfaces $S_1$ and $S_2$, consider conics that makes contact of order $\geq 3$ with $S_i$
at points $p_i$, $i=1,2$, in directions $Y_i$ which are $h_i$-orthogonals to the intersection $Z$ of $T_{p_1}S_1$ and $T_{p_2}S_2$. 
We shall prove in this section that the set of centers of these $3+3$ conics coincides with the set EMH.

Along this section, we shall assume that $S_i$ is the graph of a function $f_i(u_i,\v_i)$, $i=1,2$, $\v_i=(v_{i,1},...,v_{i,N-1})$, and consider the normal vectors
\begin{equation}\label{eq:Normali}
N_i=\left( -(f_i)_{u_i}, ...,-(f_i)_{v_{i,j}},...., 1      \right)
\end{equation}
to $S_i$. Let $F:S_1\times S_2\times\mathbb{R}^3\to\mathbb{R}$  given by
\begin{equation}\label{eq:Normais1}
F(p_1,p_2,X)=\left( (N_1\cdot C)N_2+(N_2\cdot C)N_1 \right)\cdot(X-M),
\end{equation}
where $\cdot$ denotes the canonical inner product,  $N_i$ is given by equation \eqref{eq:Normali}, $M$ is the mid-point of $(p_1,p_2)$ and $C$ is the mid-chord of $(p_1,p_2)$. Then $F=0$ is the equation of the mid-plane of $(p_1,p_2)$.

\begin{lemma}\label{modelolocalsuperficies1}
Assume that the pair $(p_1,p_2)$ generates a point of EMH. Then by an affine change of coordinates,
we may assume that $p_1=(0,\0,1)$, $p_2=(0,\0,-1)$ and $S_1$ and $S_2$ are graphs of
\begin{equation}\label{eq:parameterS1}
f_1(u_1,\v_1)=1+\epsilon pu_1-\dfrac{1}{2} (p^2+\epsilon)u_1^2+\sum_{j_1,j_2} a_{j_1,j_2}v_{1,j_1}v_{1,j_2}+O(3) 
\end{equation}
and
\begin{equation}\label{eq:parameterS2}
f_2(u_2,\v_2)=-1-\epsilon pu_2+\dfrac{1}{2} (p^2+\epsilon)u_2^2+\sum_{j_1,j_2}b_{j_1,j_2}v_{2,j_1}v_{2,j_2}+O(3),
\end{equation}
where $(p,\0,0)$ is the corresponding point in EMH, $\epsilon=\pm 1$, $\epsilon p<0$, $A=(a_{j_1,j_2})$ and $B=(b_{j_1,j_2})$ are positive or negative definite. As a consequence, $(p,\0,0)$ is the center of a  conic making contact of order $\geq 3$
with $S_i$ at $p_i$ in the direction $Y_i$ $h_i$-orthogonal to the affine space $Z=span\{ \frac{\partial}{\partial \v} \}$. If $\epsilon=1$, the conic is an ellipse, while
if $\epsilon=-1$, the conic is a hyperbola.
\end{lemma}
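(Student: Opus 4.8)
The plan is to realize the configuration in affine normal coordinates so that the whole problem collapses onto the $2$-plane singled out by Theorem \ref{segundaparte}, and then to verify the stated conic by a short jet computation. Since $(p_1,p_2)$ generates a point of the EMH, Theorem \ref{segundaparte} tells us that $C$, $Y_1$ and $Y_2$ are coplanar; let $\Pi=\mathrm{span}\{Y_1,Y_2\}$. First I would pick an affine frame $(u,\v,z)$ with the midpoint $M$ at the origin, the mid-chord $C$ along the $z$-axis, the common tangent directions $Z=T_{p_1}S_1\cap T_{p_2}S_2$ equal to $\mathrm{span}\{\partial/\partial\v\}$, and $\Pi$ equal to the $(u,z)$-plane $\{\v=\0\}$; after scaling this gives $p_1=(0,\0,1)$, $p_2=(0,\0,-1)$ and $C=(0,\0,1)$. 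Each $S_i$ is then a graph $z=f_i(u_i,\v_i)$ with $Y_i\parallel\partial/\partial u_i$.

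Reading off the $2$-jets is immediate: since $\partial/\partial\v$ are tangent, the $\v$-linear terms of $f_i$ vanish, and since $Y_i$ is $h_i$-orthogonal to $Z$, Lemma \ref{lemma:MixedTerm} kills the mixed terms $(f_i)_{u_iv_{i,j}}$. Hence
\begin{equation*}
f_i(u_i,\v_i)=(-1)^{i+1}+s_iu_i+\tfrac12\kappa_iu_i^2+Q_i(\v_i)+O(3),
\end{equation*}
with $Q_i$ a quadratic form; local convexity makes the Hessian definite, so the matrices $A,B$ of $Q_1,Q_2$ are positive or negative definite, while $s_i=(f_i)_{u_i}(0)$ and $\kappa_i=(f_i)_{u_iu_i}(0)$.

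The core step is to turn the intrinsic EMH condition into a relation between these jets. Recall that for a graph the affine conormal is $\nu_i=\rho_iN_i$ with $N_i=(-s_i,\0,1)$ and $\rho_i=|\det\mathrm{Hess}f_i|^{-1/(N+2)}>0$, and that the Blaschke metric satisfies $h_i(Y_i,Y_i)=\rho_i\kappa_i$. Since $C=(0,\0,1)$ and $Y_i=(1,\0,s_i)$, one computes $\nu_i(C)=\rho_i$, $\nu_1(Y_2)=\rho_1(s_2-s_1)$ and $\nu_2(Y_1)=\rho_2(s_1-s_2)$, so that the slope factors in \eqref{eq:Lambda} cancel and $\lambda=(\rho_1/\rho_2)(\kappa_1/\kappa_2)^{1/3}$. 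Substituting into \eqref{eq:nu1nu2}, namely $\rho_1=-\lambda\rho_2$, forces $\kappa_1+\kappa_2=0$ and $\lambda=-\rho_1/\rho_2$. With this value the coefficient in Theorem \ref{segundaparte} becomes $\lambda\nu_2(Y_1)/\nu_1(Y_2)=1$, so the solution reads $X-M=B(Y_1+Y_2)$. I then spend the residual affine freedom in the $(u,z)$-plane fixing $p_1,p_2,Z$, namely $(u,z)\mapsto(au,cu+z)$: the shear $c$ makes the slopes opposite ($s_2=-s_1$), which puts $Y_1+Y_2=(2,\0,0)$ and hence the EMH point on the $u$-axis, while the scaling $a$ and the discrete choice $\epsilon=\pm1$ (with the sign of $a$ fixed so that $\epsilon p<0$) normalize the remaining data to $s_1=\epsilon p$ and $\kappa_1=-(p^2+\epsilon)$. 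Feeding the constants of Lemmas \ref{corda} and \ref{derivadasconormais} into \eqref{eq:B} then gives $X-M=(p,\0,0)$, so the EMH point is $(p,\0,0)$. I expect this reduction---that \eqref{eq:nu1nu2} is exactly $\kappa_1+\kappa_2=0$, and that the conformal factors $\rho_i$ cancel so that the EMH point lands precisely at $u=p$---to be the main obstacle, since it is where the affine-invariant quantities must be matched against the explicit graph data.

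The announced consequence is then a one-line check. Put $Q(u,z)=\epsilon(u-p)^2+z^2$, whose center is $(p,\0,0)$; substituting $z=f_1(u,\0)=1+\epsilon pu-\tfrac12(p^2+\epsilon)u^2+O(3)$ and using $\epsilon^2=1$ gives
\begin{equation*}
Q\bigl(u,f_1(u,\0)\bigr)=(1+\epsilon p^2)+O(u^3),
\end{equation*}
so $Q$ has contact of order $\ge3$ with the section $S_1\cap\Pi$ at $p_1$ in the direction $Y_1$, and the identical computation works at $p_2$. As $Q$ is an ellipse for $\epsilon=1$ and a hyperbola for $\epsilon=-1$, this exhibits $(p,\0,0)$ as the center of a $3+3$ conic and finishes the proof.
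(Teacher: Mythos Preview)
Your argument is correct and reaches the same normal form, but by a genuinely different route from the paper's. The paper first spends the affine freedom to put the mid-hyperplane at $\{z=0\}$, which already forces $s_2=-s_1$; it then writes $f_1$ directly in the final form with parameters $\epsilon,p$, leaves the quadratic coefficient of $f_2$ as an unknown $\delta$, computes $F,F_{u_1},F_{\v_1},F_{u_2},F_{\v_2}$ explicitly from the normal-vector description \eqref{eq:Normais1}, and observes that the resulting linear system at the origin is consistent only when $\delta=1$. You instead keep general $2$-jets $s_i,\kappa_i$ and feed the graph data into the intrinsic EMH condition of Proposition~\ref{teoprincipal}: the squared factors in \eqref{eq:Lambda} cancel, so \eqref{eq:nu1nu2} collapses to $(\kappa_1/\kappa_2)^{1/3}=-1$, i.e.\ $\kappa_1+\kappa_2=0$; the shear and scale then normalize, and Lemma~\ref{corda} locates $X$ at $(p,\0,0)$. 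Your approach is more conceptual---it explains \emph{why} the paper's $\delta$ must equal $1$ (this is exactly the content of \eqref{eq:nu1nu2}) and avoids recomputing the system---while the paper's approach is more self-contained and produces the explicit system \eqref{eq:SystemEPM1}, which it reuses in the converse Lemma~\ref{modelolocalsuperficies2}. Two small remarks: your assertion $\rho_i>0$ is neither needed nor obvious without a sign convention for the affine normal (only $\rho_i\neq 0$ is used, and the signs cancel consistently in \eqref{eq:nu1nu2} and \eqref{eq:Lambda}); and your normalization tacitly requires $s^2+\kappa\neq 0$, which is guaranteed precisely because a finite EMH point exists (otherwise $B$ in \eqref{eq:B} diverges). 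Your explicit verification of the $3+3$ contact with $Q(u,z)=\epsilon(u-p)^2+z^2$ is a nice addition where the paper merely asserts ``it is not difficult to verify''.
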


\begin{proof}
Consider $p_1 \in S_1$ and $ p_2 \in S_2$ with non-parallel tangent planes. By an adequate affine change of variables, we may assume that
$p_1=(0,\0,1)$, $p_2=(0,\0,-1)$ and the mid-plane of $(p_1,p_2)$ is $z=0$. Since, by theorem \ref{segundaparte}, $Y_1,Y_2$ and $(0,\0,1)$ are co-planar, we may also assume that $Y_1$ and $Y_2$ 
are in the $xz$-plane.  We may also assume that theaffine space $Z$, intersection of the tangent planes $T_{p_1}S_1$ and $T_{p_2}S_2$ is the $\y$-space.
By lemma \ref{lemma:MixedTerm}, these conditions implies that the coefficients of $u_1v_{1,j}$ and $u_2v_{2,j}$ are zero. Since the tangent plane at $p_1$
contains $Z$, $S_1$ is the graph of a function $f_1$ of the form 
\begin{equation*}\label{eq:parameterS1d}
f_1(u_1,\v_1)=1+\epsilon pu_1-\dfrac{1}{2}(p^2+\epsilon)u_1^2+\sum_{j_1,j_2}a_{j_1,j_2}v_{1,j_1}v_{1,j_2}+O(3),
\end{equation*}
for $\epsilon=\pm 1$, $\epsilon p<0$. 
The tangent plane to $S_2$ at $(0,\0,-1)$ is the reflection of the tangent plane to $S_1$ at $(0,\0,1)$, so $S_2$ is the graph of a function $f_2$
of the form
\begin{equation*}\label{eq:parameterS2d}
f_2(u_2,\v_2)=-1-\epsilon pu_2+\dfrac{\delta}{2}(p^2+\epsilon)u_2^2+\sum_{j_1,j_2}b_{j_1,j_2}v_{2,j_1}v_{2,j_2}+O(3),
\end{equation*}
for some $\delta\in\mathbb{R}$.
From these formulas we obtain 
$$
N_1=(-\epsilon p+(p^2+\epsilon)u_1, -2A\v_1, 1),\ \ N_2=(\epsilon p-(p^2+\epsilon)u_2, -2B\v_2, 1).
$$
Using 
$$
C=(u_1-u_2,\v_1-\v_2,f_1-f_2),\ \ M=\frac{1}{2}(u_1+u_2,\v_1+\v_2,f_1+f_2),
$$
equation \eqref{eq:Normais1} leads to $F=2z$ at $u_1=\v_1=u_2=\v_2=0$. Straightforward but long calculations also leads 
to $F_{u_1}=x+pz-p$, $F_{u_2}=(p^2-p^2\delta-\delta)x+pz+p$, $F_{\v_1}=A\y$, $F_{\v_2}=B\y$ at $u_1=\v_1=u_2=\v_2=0$.
Thus the system $F=F_{u_1}=F_{\v_1}=F_{u_2}=F_{\v_2}=0$ at the origin becomes
\begin{equation}\label{eq:SystemEPM1}
\left\{
\begin{array}{ll}
-2z=0 \\
x+pz-p=0 \\
A\y=0 \\
(p^2-p^2\delta-\delta)x+pz+p=0 \\
B\y=0
\end{array}
\right. \ .
\end{equation}
Since, by hypothesis, this system admit a solution, this solution must be $(p,\0,0)$. Thus we conclude that $\delta=1$.
It is not difficult to verify now that there exists a conic centered at $(p,\0,0)$ contained in the plane $xz$ and making contact of order
$\geq 3$ with $S_1$ at $p_1$ and $S_2$ at $p_2$. Moreover, this conic is an ellipse if $p<0$ and a hyperbola if $p>0$.
\end{proof}

\begin{lemma}\label{modelolocalsuperficies2}
Consider a conic that makes contact of order $\geq 3$ with $S_i$ at points $p_i$, $i=1,2$, in directions $Y_i$ which are $h_i$-orthogonals to the intersection $Z$ of $T_{p_1}S_1$ and $T_{p_2}S_2$. Then, by an affine change of coordinates,
we may assume that $p_1=(0,\0,1)$, $p_2=(0,\0,-1)$ and $S_1$ and $S_2$ are graphs of functions $f_1$ and $f_2$ given by equations \eqref{eq:parameterS1} and \eqref{eq:parameterS2},
where $(p,\0,0)$ is the center of the conic and $\epsilon=1$ if the conic is an ellipse and $\epsilon=-1$ if the conic is a hyperbola. As a consequence, $(p,\0,0)$ belongs to EMH.
\end{lemma}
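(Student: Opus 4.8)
The plan is to run the argument of Lemma \ref{modelolocalsuperficies1} in reverse: instead of starting from an EMH point and producing a $3+3$ conic, I start from the given conic, recover \emph{exactly} the normal forms \eqref{eq:parameterS1}--\eqref{eq:parameterS2}, and then let membership of $(p,\0,0)$ in the EMH drop out of the same linear system \eqref{eq:SystemEPM1}.

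First I would fix the affine coordinates directly from the conic. Since the conic is a central conic (ellipse or hyperbola), it spans a well-defined affine $2$-plane, carries a center, and its tangent lines at $p_1,p_2$ are the directions $Y_1,Y_2$. Using the transversality built into the hypothesis (the intersection $Z=T_{p_1}S_1\cap T_{p_2}S_2$ is a genuine codimension-$2$ space), I pick an affine map sending this $2$-plane to the $xz$-plane, $Z$ to the $\y$-space, and $p_1,p_2$ to $(0,\0,1),(0,\0,-1)$; the center then lands in the $xz$-plane, and the residual affine freedom fixing $p_1,p_2$, namely the maps $(x,\y,z)\mapsto(ax,\y,cx+z)$, lets me place the center at $(p,\0,0)$ and, through the scaling $a$, normalise the conic to $\alpha(x-p)^2+\gamma z^2=1$ with $\alpha=\epsilon\gamma$, $\epsilon=\pm1$ ($\epsilon=1$ for the ellipse, $\epsilon=-1$ for the hyperbola). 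The relation $\alpha p^2+\gamma=1$ follows from the conic passing through $(0,\0,\pm1)$. Transversality of the tangent planes forces $p\neq0$, and the reflection $x\mapsto-x$ (which fixes $p_1,p_2$ and sends $p\mapsto-p$) lets me arrange $\epsilon p<0$. Since the Blaschke structure is affine invariant, the $h_i$-orthogonality of $Y_i$ to $Z$ survives all these changes.

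With these coordinates I would read off the normal forms. Because $T_{p_i}S_i\supset Z$, the graph $f_i$ has no linear $\v_i$ terms; because $Y_i=\psi_{u_i}$ is $h_i$-orthogonal to $Z$, Lemma \ref{lemma:MixedTerm} kills the mixed $u_iv_{i,j}$ terms. It then remains to pin down the $u_i$-jet, and here is the crux: $3$-point contact of the conic with the planar slice $z=f_i(u_i,\0)$ means their $2$-jets at $p_i$ coincide. Implicitly differentiating $\alpha(x-p)^2+\gamma z^2=1$ at $(0,1)$ gives slope $\alpha p/\gamma=\epsilon p$ and second derivative $-\tfrac{\alpha}{\gamma}\bigl(1+\tfrac{\alpha p^2}{\gamma}\bigr)=-(p^2+\epsilon)$, forcing the $u_1$-coefficients of $f_1$ to be exactly $\epsilon p$ and $-\tfrac12(p^2+\epsilon)$; the antisymmetric computation at $(0,-1)$ produces $-\epsilon p$ and $+\tfrac12(p^2+\epsilon)$ for $f_2$. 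The transverse quadratic forms $A=(a_{j_1,j_2})$, $B=(b_{j_1,j_2})$ and the $O(3)$ remainders are invisible to this planar $2$-contact, so they remain free, and local convexity of $S_1,S_2$ makes them definite. This yields precisely \eqref{eq:parameterS1}--\eqref{eq:parameterS2} with no free parameter: the single conic determines both slices, which is exactly the condition $\delta=1$ of Lemma \ref{modelolocalsuperficies1}.

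Once $f_1,f_2$ are in the normal form, the computation already performed in Lemma \ref{modelolocalsuperficies1} applies verbatim: from \eqref{eq:Normais1} one obtains, at the origin, $F=2z$, $F_{u_1}=x+pz-p$, $F_{u_2}=-x+pz+p$ (this is the $\delta=1$ case), $F_{\v_1}=A\y$ and $F_{\v_2}=B\y$, so the envelope system \eqref{eq:SystemEPM1} is solved by $X=(p,\0,0)$ because $A,B$ are nondegenerate; hence $(p,\0,0)\in\textrm{EMH}$. I expect the only genuine obstacle to be bookkeeping in the normalisation step, verifying that the conic-plane, the space $Z$, the two points and the center can be brought to standard position simultaneously while the residual scaling achieves $\alpha=\epsilon\gamma$, together with the sign conventions ($\epsilon p<0$, definiteness of $A,B$); the derivative matching and the final system are routine and identical to Lemma \ref{modelolocalsuperficies1}.
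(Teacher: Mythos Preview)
Your proposal is correct and follows essentially the same route as the paper: normalise affinely so that $p_1,p_2,Z$ and the conic's plane are in standard position, use Lemma~\ref{lemma:MixedTerm} to kill the mixed $u_iv_{i,j}$ terms, read off the $u_i$-jets from the $3$-contact with the conic to obtain exactly \eqref{eq:parameterS1}--\eqref{eq:parameterS2}, and then observe that $(p,\0,0)$ solves the system~\eqref{eq:SystemEPM1}. The paper's proof is terser and leaves the explicit $2$-jet matching and the $\epsilon p<0$, $\alpha=\epsilon\gamma$ normalisations implicit, but the logic is the same.
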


\begin{proof}
Consider $p_1 \in S_1$ and $ p_2 \in S_2$ with non-parallel tangent planes. By an adequate affine change of variables, we may assume that
$p_1=(0,\0,1)$, $p_2=(0,\0,-1)$ and the mid-plane is $z=0$. We may also assume that the conic is contained in the $xz$-plane and that
the intersection of the tangent planes $T_{p_1}S_1$ and $T_{p_2}S_2$ is the $\y$-space. By lemma \ref{lemma:MixedTerm}, these conditions imply that the coefficients
of $u_1v_{1,j}$ and $u_2v_{2,j}$ are zero.

Now assuming that the center of the conic is the point $(p,\0,0)$,  $S_1$ and $S_2$ are graphs of functions given by equations
\eqref{eq:parameterS1} and \eqref{eq:parameterS2}, for some $\epsilon=\pm 1$. As a consequence, $(p,\0,0)$ satisfies the system \eqref{eq:SystemEPM1}, which implies that $(p,\0,0)$ belongs to EMH.
\end{proof}

From the above two lemmas we can conclude the main result of this section.

\begin{proposition}
The set of centers of conics which make contact of order $\geq 3$ at points $p_i\in S_i$ at directions $Y_i$ which are
$h_i$-orthogonals to the intersection of $T_{p_i}S_i$, $i=1,2$, coincides with the set EMH.
\end{proposition}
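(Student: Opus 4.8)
The plan is to establish the equality of the two sets by a double-inclusion argument, which is exactly what the two preceding lemmas are designed to provide. The final proposition asserts that the set of centers of $3+3$ conics (with the stated $h_i$-orthogonality condition on the contact directions $Y_i$) coincides with the EMH, so I would simply assemble the two implications already proved.

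First I would recall the setup: a point of the EMH is, by definition, a point $X$ for which the pair $(p_1,p_2)$ solves the system \eqref{epm}. Lemma \ref{modelolocalsuperficies1} takes such a pair, normalizes it by an affine change of coordinates to the standard form with $p_1=(0,\0,1)$, $p_2=(0,\0,-1)$, and local graphs \eqref{eq:parameterS1}--\eqref{eq:parameterS2}, and then exhibits, at the EMH point $(p,\0,0)$, an explicit conic in the $xz$-plane making contact of order $\geq 3$ with each $S_i$ at $p_i$ in a direction $h_i$-orthogonal to $Z$. This gives the inclusion $\mathrm{EMH}\subseteq\{\text{centers of }3+3\text{ conics}\}$. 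Conversely, Lemma \ref{modelolocalsuperficies2} starts from a $3+3$ conic with the orthogonality condition, reduces to the same normal form, and shows that its center $(p,\0,0)$ satisfies the system \eqref{eq:SystemEPM1}, hence lies in the EMH. This gives the reverse inclusion.

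The key technical bridge in both lemmas is that the fully reduced system \eqref{epm} coincides, in the chosen coordinates, with the explicit linear system \eqref{eq:SystemEPM1}, together with the observation (from Theorem \ref{segundaparte}) that the whole geometry is confined to the plane spanned by $Y_1$ and $Y_2$ — here the $xz$-plane — so that the conic may be sought as a planar conic. The step that carries all the content is matching the second-order data: the condition $\delta=1$ forced by solvability in Lemma \ref{modelolocalsuperficies1}, which is precisely what makes the quadratic part of $f_2$ the affine reflection of that of $f_1$ and thereby guarantees the common $3+3$ conic exists. I would therefore phrase the proof as: by Lemma \ref{modelolocalsuperficies1}, every EMH point is the center of such a conic, and by Lemma \ref{modelolocalsuperficies2}, every such center is an EMH point; combining these two inclusions yields the asserted equality of sets.

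The main obstacle is not conceptual but bookkeeping: one must be certain that the orthogonality hypothesis on $Y_i$ in the proposition is exactly the hypothesis used in both lemmas (it is, via Lemma \ref{lemma:MixedTerm}, which forces the vanishing of the mixed $u_iv_{i,j}$ coefficients and thus the splitting $Z=\mathrm{span}\{\partial/\partial\v\}$), so that the two lemmas genuinely address the same family of conics and there is no gap between the set realized in one direction and the set consumed in the other. Since both lemmas have already been established, the proof of the proposition itself is a short citation of the two inclusions with no further computation required.
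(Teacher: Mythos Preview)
Your proposal is correct and follows exactly the paper's approach: the proposition is stated immediately after the two lemmas with the remark that it follows from them, and your double-inclusion argument citing Lemma~\ref{modelolocalsuperficies1} for $\mathrm{EMH}\subseteq\{\text{centers}\}$ and Lemma~\ref{modelolocalsuperficies2} for the reverse inclusion is precisely what the paper intends. No further computation is needed, and your remarks on the role of Lemma~\ref{lemma:MixedTerm} and Theorem~\ref{segundaparte} in ensuring the hypotheses match are accurate elaborations of what the paper leaves implicit.
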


\section{Regularity of the EMH}

 In this section, we shall study the regularity of the Envelope of Mid-Hyperplanes.
 
\subsection{A general condition for regularity}

Let $F$ be given by equation \eqref{eq:Normais1} and consider the map
$$
\begin{array}{cccl}
G: & \mathbb{R}^{2N} \times \mathbb{R}^{N+1} & \longrightarrow & \mathbb{R}^{2N+1}  \\
    & (u_1,\v_1,u_2,\v_2,X) & \longmapsto & \left(F,F_{u_1},F_{\v_1},F_{u_2},F_{\v_2}\right).\\
\end{array}
$$
Then the set $EMH$ is the projection in $\mathbb{R}^{N+1}$ of the set $G=0$. If $\0\in\mathbb{R}^{2N+1}$ is a regular value of $G$, then $G^{-1}(0)$ is a $N$-dimensional submanifold
of $\mathbb{R}^{3N+1}$. We want to find conditions under which $\pi_2(G^{-1}(0))$ becomes smooth, where $\pi_2(u_1,\v_1,u_2,\v_2,X)=X$.

The jacobian matrix of $G$ is
\[
JG=\left(\begin{BMAT}[5pt]{c|c}{c|c}
\begin{BMAT}[5pt]{cccc}{c}
 F_{u_1}  & F_{\v_1}  & F_{u_2}    & F_{\v_2}
\end{BMAT}
&\begin{BMAT}{ccc}{c}
 \ \ F_{x}  & \ \  F_{y}  & \ \ F_{z}
\end{BMAT} \\
\begin{BMAT}[10pt]{c}{c}
\begin{array}{cccc}
  F_{u_1u_1} & F_{u_1\v_1} & F_{u_1u_2} & F_{u_1\v_2} \\
  F_{\v_1u_1} & F_{\v_1\v_1} & F_{\v_1u_2} & F_{\v_1\v_2} \\
  F_{u_2u_1} & F_{u_2\v_1} & F_{u_2u_2} & F_{u_2\v_2} \\
  F_{\v_2u_1} & F_{\v_2\v_1} & F_{\v_2u_2} & F_{\v_2\v_2}
\end{array}%
\end{BMAT}
& \begin{BMAT}{ccc}{cccc}
\ \ F_{u_1x} & \ \ F_{u_1y} & \ \ F_{u_1z} \\
\ \ F_{\v_1x} &\ \ F_{\v_1y} & \ \ F_{\v_1z} \\
\ \ F_{u_2x} & \ \ F_{u_2y} & \ \ F_{u_2z} \\
\ \ F_{\v_2x} & \ \ F_{\v_2y} &  \ \ F_{\v_2z} \\
\end{BMAT}
\end{BMAT}
\right).
\]
Denote by $JG1$ the matrix of second derivatives of $F$ with respect to the parameters $u_1,\v_1,u_2,\v_2$, which is the $2N\times 2N$ matrix
consisting of the elements $JG(j_1,j_2)$, $2\leq j_1\leq 2N+1$, $1\leq j_2\leq 2N$.
Denote $\det\left(JG1(u_1,\v_1,u_2,\v_2,X)\right)$ by $\Delta(u_1,\v_1,u_2,\v_2)$.

\begin{thm} \label{teorema_de_suavidadeSC3C}
If $\Delta\neq0$, then the $EMH$ is smooth at the point $X$.
\end{thm}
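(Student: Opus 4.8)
The plan is to exhibit EMH near $X$ as the image, under the projection $\pi_2(u_1,\v_1,u_2,\v_2,X)=X$, of the smooth $N$-manifold $M:=G^{-1}(\0)$, and then to show that $\pi_2|_M$ is an immersion precisely because $\Delta\neq0$. The whole argument hinges on one structural observation about $JG$ along the zero set.

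First I would record that on $M$ the blocks $F_{u_1},F_{\v_1},F_{u_2},F_{\v_2}$ all vanish, being components of $G$ itself. Hence the top row of $JG$, namely $dF$, has its entire $(u_1,\v_1,u_2,\v_2)$-part equal to zero there, so on $M$ the Jacobian takes the block form
\begin{equation*}
JG=\left(\begin{array}{cc} \0 & F_X \\ JG1 & JG2 \end{array}\right),
\end{equation*}
where $F_X$ is the $1\times(N+1)$ derivative of $F$ in $X$, $JG1$ is the $2N\times2N$ block of second derivatives in the surface parameters, and $JG2$ is the remaining $2N\times(N+1)$ mixed block.

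Next, using $\Delta=\det(JG1)\neq0$, I would confirm that $\0$ is a regular value of $G$, so that $M$ is genuinely an $N$-dimensional submanifold. The bottom $2N$ rows are independent because their first $2N$ columns already constitute the invertible matrix $JG1$; and since $F=L\cdot(X-M)$ with $L=(N_1\cdot C)N_2+(N_2\cdot C)N_1\neq\0$ — the mid-hyperplane being a genuine hyperplane, as $N_1,N_2$ are independent by transversality — the top row $(\0,F_X)$ is nonzero and, carrying zeros in its first $2N$ columns, is independent of the bottom ones. Thus $JG$ has full rank $2N+1$.

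Finally I would prove that $\pi_2|_M$ is an immersion at the point $p\in M$ lying over $X$. A tangent vector $w=(\xi,\dot X)\in T_pM$, with $\xi$ gathering the parameter components, satisfies $JG\,w=\0$, and $d\pi_2(w)=\dot X$. If $\dot X=\0$, the lower block of $JG\,w=\0$ reads $JG1\,\xi=\0$, so $\xi=\0$ because $\Delta\neq0$; hence $\ker(d\pi_2|_{T_pM})=0$ and $d\pi_2|_{T_pM}$ is injective. Therefore $\pi_2|_M$ is an immersion of an $N$-manifold into $\mathbb{R}^{N+1}$, thus a local embedding, and its image EMH is a smooth hypersurface near $X$. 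The main difficulty is conceptual rather than computational: one must exploit the vanishing of the first-order derivatives $F_{u_i},F_{\v_i}$ on the zero set to obtain the block form above. Once that is in place, both the regularity of $M$ and the injectivity of $d\pi_2$ collapse to the single condition $\det(JG1)\neq0$, the only auxiliary input being $F_X\neq\0$, which transversality of the tangent spaces guarantees.
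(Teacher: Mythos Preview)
Your proof is correct and follows essentially the same route as the paper's: use $F_{u_i}=F_{\v_i}=0$ on the envelope together with $F_X\neq\0$ (the mid-hyperplane being genuine) to see that $JG$ has full rank, and then use $\det(JG1)\neq0$ again to show that $d\pi_2$ is injective on the tangent space of $G^{-1}(\0)$. You simply spell out more explicitly the block structure of $JG$ and the kernel argument for the immersion, which the paper compresses into two sentences.
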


\begin{proof}
Since the mid-plane is non-degenerate, the equalities $F_x=F_\y=F_z=0$ cannot occur simultaneously. 
Moreover, at points of the envelope, $F_{u_1}=F_{\v_1}=F_{u_2}=F_{\v_2}=0$.
Thus the hypothesis implies that $JG$ has rank $2N+1$ and so $G^{-1}(\0)$ is a regular $N$-submanifold of $\mathbb{R}^{3N+1}$.
Moreover, the hypothesis $\Delta\neq 0$ implies that the differential of $\pi$ restricted to $G^{-1}(\0)$
is an isomorphism. We conclude that the $EMH$ is smooth at this point.
\end{proof}

\subsection{The case of surfaces}

In the case of surfaces (N=2), we can understand better the meaning of the condition $\Delta\neq 0$. 
By lemma \ref{modelolocalsuperficies1}, we may assume that $S_1$ and $S_2$ are graphs of functions $f_1$ and $f_2$ given by 
$$
f_1(u_1,v_1)=1+\epsilon pu_1-\frac{1}{2}(p^2+\epsilon)u_1^2+av_1^2+a_0u_1^3+a_1u_1^2v_1+a_2u_1v_1^2+a_3v_1^3+O(4),
$$
$$
f_2(u_2,v_2)=-1-\epsilon pu_2+\frac{1}{2}(p^2+\epsilon)u_2^2+bv_2^2+b_0u_2^3+b_1u_2^2v_2+b_2u_2v_2^2+b_3v_2^3+O(4).
$$

Long but straightforward calculations using formula \eqref{eq:Normais1} show that the jacobian matrix $JG1$ at point $(0,0,0,0,p,0,0)$ is given by
\[\footnotesize{
\left(\begin{BMAT}{cccc}{cccc}
3p^2+3p^4-6pa_{0} & -2pa_{1}               & 0                & 0 \\
-2pa_{1}          & -2ap^2-2a_{2}p       & 0                & (a+b)(p^2+1)\\
0               & 0                    & -3p^4-3p^2-6pb_{0} &  -2pb_{1} \\
0               & (a+b)(p^2+1)     & -2pb_{1}           &  -2bp^2-2b_{2}p
\end{BMAT}
\right).}
\]
Thus the condition $\Delta\neq 0$ means that the determinant of this $4\times 4$ matrix is not zero.
We give below geometric interpretations of this condition in some particular cases, but a geometric 
interpretation in the general case remains to be given.

\subsubsection{$3+3$ contact with quadrics}

The condition $a+b=0$ is equivalent to the existence of a quadric $Q$ with contact of order $\geq 3$ with $S_1$ at $p_1$ and $S_2$
at $p_2$. In this case, the condition $\Delta\neq 0$ can be written as $\delta_1\delta_2\neq 0$, where 
\[
\delta_1=\det\left[
\begin{array}{cc}
3p^2+3p^4-6pa_{0} & -2pa_{1}     \\
-2pa_{1}          & -2ap^2-2a_{3}p 
\end{array}
\right] ,
\]
\[
\delta_2=\det\left[
\begin{array}{cc}
-3p^2-3p^4-6pb_{0} & -2pb_{1}     \\
-2pb_{1}          & -2bp^2-2b_{3}p 
\end{array}
\right] .
\]

The condition $\delta_1=0$ means that the quadric $Q$  has in fact a higher order contact with $S_1$ at $p_1$. In fact 
taking 
$$
h_1(x,y,z)=(x-p)^2+z^2+ay^2,
$$
consider the contact function $\bar{h}_1(u_1,v_1)=h_1(\psi_i(u_1,v_1))$.  Then $\bar{h}_1(0,0)=0$ and $(0,0)$ is a critical point of $\bar{h}_1$. 
One can verify that $\delta_1=0$ if and only if $(0,0)$ is a degenerate critical point of $\bar{h}_1$. A similar geometric interpretation holds for 
$\delta_2$.

\subsubsection{The case $a_1=a_{2}=b_{1}=b_{2}=0$.}

In this case we have
$$
\Delta=\left( 3p^2+3p^4-6pa_{0} \right)\cdot \left(-3p^4-3p^2-6pb_{0}\right) \delta, 
$$
where $\delta=-\left( (a-b)^2p^4+(a+b)^2+2p^2(a+b)^2   \right) <0$. Then the condition $\Delta\neq 0$ says that the contacts of the conic with $S_1$ at $p_1$ and $S_2$ at $p_2$ are both exactly $3$.

\section{ A counter-example for the reflection property}

In \cite{Sapiro}, it is proved that if the AESS of a pair of planar curves is contained in a line, then there exists an affine reflection taking one curve into the other. This fact is not true for the EMH
of a pair of hypersurfaces as the following example shows us.

Consider $\gamma_1(t)=(t,0,f(t))$ a smooth convex curve and let $\gamma_2(t)=(t-\lambda f(t),0,-f(t))$, $\lambda\in\mathbb{R}$, be obtained from $\gamma_1$ by an affine reflection. Let $S_1$
and $S_2$ be rotational surfaces obtained by rotating $\gamma_1$ and $\gamma_2$ around the $z$-axis. $S_1$ and $S_2$ can be parameterized by
$$
\phi_1(t,\theta)=\left(  t\cos(\theta),t\sin(\theta),f(t)   \right)
$$
and
$$
\phi_2(t,\theta)=\left(  (t-\lambda f(t))\cos(\theta), (t-\lambda f(t))\sin(\theta), -f(t) \right).
$$
The intersection $Z$ of the tangent planes at $\phi_1(t,\theta)$ and $\phi_2(t,\theta)$ has direction $\left( \sin(\theta), -\cos(\theta), 0 \right)$.

Observe that the vectors $Y_1=(\phi_1)_t$ and $Y_2=(\phi_2)_t$ are orthogonal to $Z$ in the Blaschke metric. This implies that the EMH of this pair
of surfaces is contained in the plane $z=0$. But it is clear that $S_2$ is not an affine reflection of $S_1$.

\bibliographystyle{amsplain}

\end{document}